\documentclass[english]{smfart}
\usepackage{smfthm}
\usepackage{amsmath}            
\usepackage{amsbsy}      
\usepackage{amscd}      
\usepackage[dvips]{graphicx}
\usepackage[all]{xy}
\usepackage{amssymb}      

\theoremstyle{plain}      

\newtheorem{theorem}{Theorem}[section]      
\newtheorem{lemma}{Lemma}[section]

\newtheorem{proposition}{Proposition}[section]

\theoremstyle{remark}      
\newtheorem{remark}{Remark}[section]

\newcommand{\Z}{{\mathbb{Z}}}   
  
\newcommand{\C}{{\mathbb{C}}}

\newcommand{\q}{{\mathfrak{q}}}    
\newcommand{\A}{{\mathfrak{A}}}

\author{Louis Funar}
\address{Institut Fourier BP 74, UMR 5582, Laboratoire de Math\'ematiques, 
Universit\'e Grenoble Alpes, CS 40700, 38058 Grenoble cedex 9, France}
\email{louis.funar@univ-grenoble-alpes.fr}

\author{Wolfgang Pitsch}
\address{Departament de Matem\`atiques \\  Universitat Aut\`onoma de Barcelona \\ 08193 Bellaterra (Cerdanyola del Vall\`es), Espa\~na}
\email{pitsch@mat.uab.es}

\title[Symplectic Schur multiplier]{The Schur multiplier of finite symplectic groups}
\alttitle{Multiplicateur de Schur des groupes symplectiques finis}

\begin{document}

\frontmatter

\begin{abstract}
We show that the Schur multiplier of 
$Sp(2g,\Z/D\Z)$ is $\Z/2\Z$, when $D$ is divisible by 4. 
\end{abstract}

\begin{altabstract}
Nous montrons que le multiplicateur de Schur de $Sp(2g,\Z/D\Z)$ est $\Z/2\Z$ quand $D$ est divisible par 4. 

\end{altabstract}

\subjclass{57 M 50, 55 N 25, 19 C 09, 20 F 38}
\keywords{Symplectic groups, 
group homology,  mapping class groups, central extension, residually finite group; 
groupes symplectiques, groupes de diff\'eotopies de surfaces, extension centrale, 
groupe r\'esiduellement fini}

\thanks{The first author was supported by the ANR 2011 BS 01 020 01 ModGroup and the second author by the FEDER/MEC grant MTM2010-20692.}

\maketitle
\mainmatter

\section{Introduction and statements}

Let $g \geq 1$ be an integer and denote  by $Sp(2g,\Z)$ the symplectic group of $2g \times 2g$ 
matrices with integer coefficients.  Deligne's non-residual finiteness theorem from \cite{De} states that the {\em universal central extension} $\widetilde{Sp(2g,\Z)}$ is 
not residually finite  since the image of its center  under any homomorphism into 
a finite group has order at most two when $g\geq 3$. Our first motivation was to understand this result and give a sharp statement, namely to decide whether the image of the central $\Z$  
might be of order two. 
Since these symplectic groups have the  congruence subgroup property, this boils down to understanding the second homology of symplectic groups with coefficients in finite cyclic groups. In the sequel, for simplicity and unless otherwise explicitly stated, all (co)homology groups will be understood to be with trivial integer coefficients. An old theorem of Stein (see \cite{Stein}, Thm. 2.13 and Prop. 3.3.a) is that 
 $H_2(Sp(2g,\Z/D\Z))=0$, when $D$ is not divisible by 4. 
The case $D\equiv  0 \; ({\rm mod} \; 4)$ remained open since then; this is explicitly mentioned for instance in (\cite{Pu1}, Remarks after Thm. 3.8).  Our main result settles this case: 

\begin{theorem}\label{tors-sympl0}
The second homology group of finite principal congruence quotients 
of $Sp(2g,\Z)$, $g\geq 3$ is 
\[H_2(Sp(2g,\Z/D\Z))=
\Z/2\Z, \;  {\rm if }\; D\equiv 0 \; ({\rm mod} \; 4). 
\]  
\end{theorem}

In comparison, recall that Beyl (see \cite{Beyl}) has showed that  $H_2(SL(2,\Z/D\Z))=\Z/2\Z$, for 
$D\equiv 0\,({\rm mod }\; 4)$ and 
Dennis and Stein proved using K-theoretic methods that for $n\geq 3$ we have 
$H_2(SL(n,\Z/D\Z))=\Z/2\Z$, for 
$D\equiv 0\,({\rm mod }\; 4)$, while $H_2(SL(n,\Z/D\Z))=0$,  for 
$D\not\equiv 0\,({\rm mod }\; 4)$ (see \cite{DS1}, Cor. 10.2 and \cite{Milnor}, section 12).

Our proof also relies on Deligne's non-residual finiteness theorem from \cite{De} and 
deep results of Putman in \cite{Pu1}, and shows that we can 
detect this $\Z/2\Z$ factor on $H_2(Sp(2g, \Z/32\Z))$ for $g \geq 4$, 
providing an explicit extension that detects this homology class.

{\bf Acknowledgements.} We are thankful to  Jean Barge, Nicolas Bergeron, Dave Benson, 
Will Cavendish, Florian Deloup, Philippe Elbaz-Vincent, Richard Hain, Greg McShane, Ivan Marin, 
Gregor Masbaum, Alexander Rahm and Alan Reid for helpful 
discussions and suggestions.  We are grateful to Pierre Lochak 
and Andy Putman for their help in clarifying a number of 
technical points and  improving the presentation and the referee for cleaning and symplifying our proof. 

\section{Preliminaries}
\subsection{Residual finiteness of universal central extensions}

In this section we collect results about universal central extensions of perfect groups,  for the sake of completeness of our arguments.  Every perfect group $\Gamma$  has a universal central extension $\widetilde{\Gamma}$; the kernel of the canonical projection map  $\widetilde{\Gamma} \to \Gamma$ contains the center $Z(\widetilde{\Gamma})$ of 
$\widetilde{\Gamma}$ and is canonically isomorphic to the second integral homology group $H_2(\Gamma)$.  
We  will recall now how  the residual finiteness problem for the universal central $\widetilde{\Gamma}$ of a perfect and residually finite group $\Gamma$  translates into an homological 
problem about $H_2(\Gamma)$. 
We start with a classical result for maps between universal  central extensions of perfect groups. 
\begin{lemma}\label{lift}
Let $\Gamma$ and $F$ be perfect groups, $\widetilde{\Gamma}$ and 
$\widetilde{F}$ be their universal central extensions and 
$p: \Gamma \rightarrow F$ be a group homomorphism. Then there 
exists a unique homomorphism 
$\widetilde{p}:\widetilde{\Gamma}\to \widetilde{F}$ lifting $p$ such 
that the following diagram is commutative: 
\[ \begin{array}{clcrclc}
1 \to& H_2(\Gamma)&\to& \widetilde{\Gamma}&\to& \Gamma& \to 1 \\
     & p_*\downarrow & & \widetilde{p} \downarrow & & \downarrow p & \\
1 \to& H_2(F)&\to& \widetilde{F}&\to& F & \to 1  \\
\end{array}
\]
\end{lemma}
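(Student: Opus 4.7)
The plan is to build $\widetilde{p}$ by pullback and then invoke the universal property of $\widetilde{\Gamma}$.

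First I would form the pullback central extension
\[
1 \to H_2(F) \to E \to \Gamma \to 1,
\]
obtained by pulling back $1 \to H_2(F) \to \widetilde{F} \to F \to 1$ along $p:\Gamma\to F$. This is a central extension of $\Gamma$ by the abelian group $H_2(F)$, and it comes equipped with a canonical map $E \to \widetilde{F}$ covering $p$. By the defining universal property of the universal central extension $\widetilde{\Gamma} \to \Gamma$, there is a unique homomorphism $\varphi: \widetilde{\Gamma} \to E$ lying over the identity on $\Gamma$. I would then set $\widetilde{p}$ to be the composite $\widetilde{\Gamma} \xrightarrow{\varphi} E \to \widetilde{F}$; by construction it covers $p$.

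For uniqueness, suppose $\widetilde{p}_1$ and $\widetilde{p}_2$ both lift $p$. Their difference (in the abelian group-valued sense obtained from the central kernel) defines a map $\widetilde{\Gamma} \to H_2(F)$. Since $\widetilde{\Gamma}$ is perfect (the universal central extension of a perfect group is perfect) and $H_2(F)$ is abelian, this difference must be trivial, so $\widetilde{p}_1 = \widetilde{p}_2$.

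It remains to identify the restriction of $\widetilde{p}$ to $H_2(\Gamma)$ with the homologically induced map $p_*$. By construction $\widetilde{p}$ sends the kernel of $\widetilde{\Gamma}\to\Gamma$ into the kernel of $\widetilde{F}\to F$, so it restricts to a homomorphism $H_2(\Gamma)\to H_2(F)$, and this restriction depends functorially on $p$. The required identification with $p_*$ is then the classical naturality statement for the universal central extension, which can be read off from the Hopf formula $H_2(G)\cong (R\cap[F,F])/[F,R]$ applied to presentations $1\to R \to F \to G \to 1$ compatibly with $p$; equivalently, one invokes the standard fact that the assignment $G\mapsto \widetilde{G}$ is a functor on perfect groups whose kernel functor is naturally isomorphic to $H_2(-;\Z)$. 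The hard part, if any, is this last identification, but it is entirely formal once one chooses the Hopf-formula model for $H_2$ and the corresponding model for the universal central extension.
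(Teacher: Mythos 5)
Your argument is correct, but it takes a genuinely different route from the paper's. You build $\widetilde{p}$ group-theoretically: pull back $\widetilde{F}\to F$ along $p$ to obtain a central extension $E$ of $\Gamma$ by $H_2(F)$, then apply the defining universal property of $\widetilde{\Gamma}$ to get $\varphi:\widetilde{\Gamma}\to E$ and compose with $E\to\widetilde{F}$. Uniqueness comes from observing that two lifts differ by a homomorphism $\widetilde{\Gamma}\to H_2(F)$, which vanishes because $\widetilde{\Gamma}$ is perfect. The paper instead works entirely in cohomology: it uses the universal coefficients identification $H^2(G;H_2(G))\cong\mathrm{Hom}(H_2(G),H_2(G))$ and the fact that the universal central extension corresponds to the identity map there; existence of a morphism of extensions is then the tautological compatibility $p^*\mathbf 1_{H_2(F)} = (p_*)_*\mathbf 1_{H_2(\Gamma)}$ (both sides are $p_*\in\mathrm{Hom}(H_2(\Gamma),H_2(F))$), and uniqueness comes from $H^1(\Gamma;H_2(F))=\mathrm{Hom}(H_1(\Gamma),H_2(F))=0$, which is the same vanishing you use in disguise. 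The real difference is your step (c), identifying $\widetilde{p}|_{H_2(\Gamma)}$ with $p_*$: in the cohomological approach this is automatic, since the compatibility of the two $H^2$ classes literally encodes the left square of the diagram; in your approach it is a separate claim. You correctly flag it as the one nontrivial point and indicate the Hopf-formula model as the way to settle it, but you only sketch that verification. Note that once your uniqueness is in place, it suffices to exhibit \emph{any} lift of $p$ whose kernel restriction is $p_*$ (e.g. the one coming from compatible free presentations and the Hopf formula); by uniqueness it must agree with your pullback-constructed $\widetilde p$. Spelling out that one sentence would close the gap and make the argument fully self-contained.
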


For a proof we refer the interested reader to (\cite{Lang}, chap VIII) or (\cite{Brown}, chap IV, Ex. 1, 7). If $\Gamma$ is a perfect  residually finite group, to prove  that its universal central extension 
$\widetilde{\Gamma}$ is also residually finite we only have to find enough finite quotients of $\widetilde{\Gamma}$ to detect the elements in its center $H_2(\Gamma)$. The following lemma analyses the situation.
  
\begin{lemma}\label{kernel}
Let $\Gamma$ be a  perfect group and denote by
$\widetilde{\Gamma}$ its universal central extension. 
 
\begin{enumerate}
\item Let $H$ be a  finite index normal subgroup 
$H\subset \Gamma$ such that 
the image of $H_2(H)$ into $H_2(\Gamma)$ contains the subgroup $dH_2(\Gamma)$, for some $d\in\Z$. 
Let $F=\Gamma/H$ be the corresponding finite quotient 
of $\Gamma$ and $p:\Gamma\to F$ the quotient map. 
Then $d\cdot p_*(H_2(\Gamma))=0$, where $p_*:H_2(\Gamma)\to H_2(F)$ 
is the homomorphism induced by $p$. 
In particular, if $p_*:H_2(\Gamma)\to H_2(F)$ is surjective, then 
$d\cdot H_2(F)=0$. 
\item Assume that $F$ is a finite quotient of $\Gamma$ satisfying  
$d\cdot p_*(H_2(\Gamma))=0$. Let $\widetilde{F}$ denote the universal central extension of $F$. 
Then the homomorphism $p:\Gamma\to F$ has a unique lift  
$\widetilde{p}:\widetilde{\Gamma}\to \widetilde{F}$ and the kernel of 
$\widetilde{p}$  contains $d\cdot H_2(\Gamma) $.
\end{enumerate} 
\end{lemma}
Observe that in point $2.$ of Lemma \ref{kernel} the group $F$ being finite, $H_2(F)$ is also finite, hence 
one can take $d= |H_2(F)|$.

\begin{proof} 
The image of $H$ into $F$ is trivial and thus 
the image of $H_2(H)$ into $H_2(F)$ is trivial. 
This implies that $p_*(d\cdot H_2(\Gamma))=0$,
which proves the first part of the lemma.

Further, by Lemma \ref{lift} there exists an unique lift 
$\widetilde{p}:\widetilde{\Gamma}\to \widetilde{F}$. 
If $d\cdot p_*(H_2(\Gamma))=0$ then Lemma \ref{lift} yields  
$d\cdot \widetilde{p}(c)=d \cdot p_*(c)=0$, 
for any $c\in H_2(\Gamma)$. This settles the second part of the lemma.  
\end{proof}

\begin{remark}\label{divisors}
It might be possible that we have  $d'\cdot p_*(H_2(\Gamma))=0$ for some 
proper divisor $d'$ of $d$, so the first part of Lemma \ref{kernel} 
can only give an upper bound of the orders of the image 
of the second cohomology. In order to find lower bounds we need additional 
information concerning the finite quotients $F$. 
\end{remark}

\begin{lemma}\label{key lemma}
Let $\Gamma$ be a perfect group, 
$\widetilde{\Gamma}$  its universal central extension, 
$p:\Gamma\to F$ be a surjective homomorphism onto a finite group $F$ and 
$\widehat{p}:\widetilde{\Gamma}\to G$ be some lift of $p$ to 
a central extension $G$ of $F$ by some finite abelian group $C$. 
Assume that the image 
of $H_2(\Gamma)\subset \widetilde{\Gamma}$ in $G$ by $\widehat{p}$ contains an element of order $q$. Then there exists an element 
of $p_*(H_2(\Gamma))\subset H_2(F)$ of order $q$. 
\end{lemma}
\begin{proof}
 By Lemma \ref{lift} there exists a lift 
$\widetilde{p}: \widetilde{\Gamma}\to \widetilde{F}$ of $p$ into 
the universal central extension $\widetilde{F}$ of $F$. Then, by universality 
there exists a unique homomorphism $s:\widetilde{F}\to G$
of central extensions of $F$  lifting the identity map of $F$.  
The homomorphisms $\widehat{p}$ and 
$s\circ \widetilde{p}:\widetilde{\Gamma}\to G$ are then both lifts of $p$.  Using the centrality of $C$ in $G$  
it follows that the map $\widetilde{\Gamma} \to C$ given by $x \mapsto \widehat{p}(x)^{-1} \cdot (s \circ \widetilde{p} (x))$ is a group homomorphism, and hence is trivial since $\widetilde{\Gamma}$ is perfect and $C$ abelian. We conclude that $\widehat{p}= s\circ \widetilde{p}$. 

Recall that the  restriction of $\widetilde{p}$ 
to $H_2(\Gamma)$ coincides with the homomorphism 
$p_*:H_2(\Gamma)\to H_2(F)$ and that $H_2(F)$ is finite since $F$ is so.  Then, if $z\in H_2(\Gamma)$ is such that $\widehat{p}(z)$ has order $q$ in $C$, 
the element $p_*(z)\in p_*(H_2(\Gamma))\subset \widetilde{F}$ is sent by $s$ 
onto an element of order $q$ and therefore $p_*(z)$ has order a multiple 
of $q$, say $aq$. Then  $(p_*(z))^a = p_\ast(z^a) \in p_*(H_2(\Gamma))\subset \widetilde{F}$ has  order $q$. 
\end{proof}


\section{Proof of Theorem \ref{tors-sympl0}}\label{compute}

\subsection{Reducing the proof to $D=2^k$} 
Let $D=p_1^{n_1}p_2^{n_2}\cdots p_s^{n_s}$ be the prime decomposition of 
an integer $D$.  Then, according to  (\cite[Thm. 5]{NS}) we have 
$Sp(2g,\Z/D\Z)=\oplus_{i=1}^s Sp(2g,\Z/p_i^{n_i}\Z)$.
Since symplectic groups are perfect for $g\geq3$ (see e.g. 
\cite{Pu1}, Thm. 5.1),  from the K\"unneth formula, we  derive: 
\[
H_2(Sp(2g,\Z/D\Z))=\oplus_{i=1}^s H_2(Sp(2g,\Z/p_i^{n_i}\Z)).
\] 
Stein (see \cite{Stein,Stein2}) proved that for any odd prime $p$ and $n\geq 2$ the Schur multipliers vanish: 
\[
H_2(Sp(2g,\Z/p^{n}\Z))=0
\] 
while Steinberg has showed that 
\[
H_2(Sp(2g,\Z/2\Z))=0. 
\] 
Then, Theorem \ref{tors-sympl0} is 
equivalent to the  statement: 
\[ H_2(Sp(2g,\Z/2^k\Z)) = \Z/2\Z, \; {\rm for \; all \; } g \geq 3, k \geq 2.  
\]

We will freely use in the sequel two classical results due to Stein.   
{\em Stein's isomorphism theorem} (see \cite{Stein}, Thm. 2.13 and Prop. 3.3.(a))
states that there is an isomorphism:  
\[
H_2(Sp(2g,\Z/2^k\Z))\simeq H_2(Sp(2g,\Z/2^{k+1}\Z)), \; 
{\rm for \; all \; } g \geq 3, k \geq 2. 
\]

Further, {\em Stein's stability theorem} (see \cite{Stein}) states that 
the stabilization homomorphism $Sp(2g,\Z/2^k\Z)\hookrightarrow 
Sp(2g+2,\Z/2^k\Z)$ induces an isomorphism: 
\[
H_2(Sp(2g,\Z/2^k\Z))\simeq H_2(Sp(2g+2,\Z/2^{k}\Z)), \; 
{\rm for \; all \; } g \geq 3, k \geq 1. 
\]
Therefore, to prove Theorem \ref{tors-sympl0}  it suffices to show that: 
\[ H_2(Sp(2g,\Z/2^k\Z))=\Z/2\Z, \; {\rm for \; some \; } g \geq 3 \; {\rm and \; some \; } k \geq 2. 
\]

\subsection{An alternative for the order of $H_2(Sp(2g,\Z/2^k\Z))$}

As our first step we prove, as a consequence of Deligne's theorem: 

\begin{proposition}\label{alternative}
We have $H_2(Sp(2g,\Z/2^k\Z))\in \{0, \Z/2\Z\}$, when $g\geq 4$.
\end{proposition}

 \begin{proof}[Proof of Proposition \ref{alternative}]
Let $p:Sp(2g,\Z)\to Sp(2g,\Z/2^k\Z)$ be the reduction mod $2^k$
and $p_*:H_2(Sp(2g,\Z))\to H_2(Sp(2g,\Z/2^k\Z))$ the induced homomorphism.   
The first ingredient in the proof is the following result which seems well-known, and that we isolate for later reference: 

\begin{lemma}\label{surj}
The homomorphism $p_*:H_2(Sp(2g,\Z))\to H_2(Sp(2g,\Z/2^k\Z))$ is surjective, if $g\geq 4$. 
\end{lemma}
\noindent The rather technical proof of Lemma \ref{surj} is postponed to section \ref{surjective}.

Now, it is a classical result that $H_1(Sp(2g,\Z))=0$, for 
$g\geq 3$ and $H_2(Sp(2g,\Z))=\Z$, for $g\geq 4$ (see e.g. 
\cite{Pu1}, Thm. 5.1). Note however that for $g=3$, 
we have  that $H_2(Sp(6,\Z))=\Z\oplus \Z/2\Z$ according to \cite{Stein3}. 
This implies that 
$H_2(Sp(2g,\Z/2^k\Z))$ is cyclic when $g\geq 4$ (this was also shown by Stein in \cite{Stein}) and we only have to bound the order of this cohomology group.

Lemma \ref{lift} provides a lift between the universal central extensions $\widetilde{p}:\widetilde{Sp(2g,\Z)}\to 
\widetilde{Sp(2g,\Z/2^k\Z)}$ of the mod $2^k$ reduction map, such that the restriction of $\widetilde{p}$ 
to the central subgroup $H_2(Sp(2g,\Z))$ of $\widetilde{Sp(2g,\Z)}$ is 
the homomorphism $p_*:H_2(Sp(2g,\Z))\to H_2(Sp(2g,\Z/2^k\Z))$.  From Deligne's theorem \cite{De}  every 
finite index subgroup of the universal central extension  
$\widetilde{Sp(2g,\Z)}$, for $g\geq 4$, contains $2\Z$, where 
$\Z$ is the central kernel  $\ker(\widetilde{Sp(2g,\Z)}\to Sp(2g,\Z))$. If $c$ is a generator of 
the central  $\Z=H_2(Sp(2g,\Z))$ we have  $2p_*(c)=\widetilde{p}(2c)=0$. 
According to Lemma \ref{surj} $p_*$ is surjective and thus  
$H_2(Sp(2g,\Z/2^k\Z))$ is  a quotient of $\mathbb{Z}/2\mathbb{Z}$, as claimed.  
\end{proof}

\subsection{Divisibility of the universal symplectic central extension when restricted to level
subgroups}
The Siegel upper half plane $\mathcal H_g$ is the space of $g\times g$ symmetric complex matrices 
with positive definite imaginary parts.
Let $Sp(2g,L)$ denote the {\em level $L$ congruence subgroup} of $Sp(2g,\Z)$, namely the 
kernel of the mod $L$ reduction map $Sp(2g,\Z)\rightarrow Sp(2g,\Z/L\Z)$. 
The moduli space $\mathcal A_g(L)$ of principally polarized 
abelian varieties with level $L$ structures (over $\C$ of dimension $g$) is defined as the quotient 
$\mathcal H_g/Sp(2g,L)$. This is a quasiprojective orbifold. 
As $Sp(2g,\Z)$ has Kazhdan's property $T$ for $g\geq 2$ we have  
$H^1(Sp(2g,L);\Z)=0$ and there exists an injection of the Picard group 
$Pic(\mathcal A_g)\to H^2(Sp(2g,L);\Z)$; Borel (\cite{Bo}) proved 
that $H^2(Sp(2g,L);\Z)$ has rank $1$, for $g\geq 3$ and Putman (\cite{Pu1}, Thm. D) showed that 
for $g\geq 4$ and $4\nmid L$ this injection is an isomorphism. 
There is a line bundle $\lambda_g\in Pic(\mathcal A_g)$ whose holomorphic sections are 
Siegel modular forms of weight $1$ and level $1$, also called the Hodge line bundle.  
Then $\lambda_g$ generates $Pic(\mathcal A_g)$ (see \cite{Frei}) and 
its first Chern class $c_1(\lambda_g)$ generates $H^2(Sp(2g,\Z);\Z)=\Z$, for $g\geq 3$. 
Denote by $\lambda_g(L)\in Pic(\mathcal A_g(L))$ the pullback of $\lambda_g$ to the orbifold covering $\mathcal A_g(L)$.

Recall the (slightly corrected) version of Putman's lemma (\cite{Pu1}, Lemma 5.5):
\begin{lemma}
Let $L \geq 2$ be an even number. The pull-back $\lambda_g(L)$ of the Hodge bundle $\lambda_g \in \mathcal{A}_g$ to $\mathcal{A}_g(L)$ is divisible by $2$ if  $4 \mid L$ and 
is divisible by $2$ modulo torsion but not divisible by $2$ if $4\nmid L$.   
\end{lemma}

In view of the canonical injection $Pic(\mathcal{A}_g(L)) \hookrightarrow H^2(Sp(2g,L);\mathbb{Z})$, this result cohomologically translates as:

\begin{lemma}\label{putmanlemma}
Let $L\geq 2$ be an even number. The pullback of the class of the universal central extension $c \in H^2(Sp(2g,\Z);\Z)$ to $Sp(2g,L)$ is
divisible by $2$ if $4 \mid L$ and 
 divisible by $2$ modulo torsion if $4 \nmid L$.  
\end{lemma}
\begin{proof}
The transformation formulas for the theta nulls (see e.g. \cite{Frei}) provide a square root for the pull-back, nevertheless, these computations do not see the torsion (i.e. flat) bundles. So they provide  for $L=2$ and hence for every even $L$  a square root for $\lambda_g$ modulo torsion.

The universal coefficients exact sequence reads:  
\[
0 \to  {\rm Hom}(H_1(Sp(2g,2);\mathbb{Z}),\mathbb{C}^\ast) \to  H^2(Sp(2g,2);\mathbb{Z})
 \to {\rm Hom}(H_2(Sp(2g,2);\mathbb{Z}),\mathbb{Z}) \to 0\]

As $Sp(2g,\mathbb{Z})$ is perfect, the  divisibility by 2 mod torsion of  $\lambda_g(2)$ shows that the image of $H_2(Sp(2g,2);\mathbb{Z}) \to H_2(Sp(2g,\mathbb{Z});\mathbb{Z}) =\mathbb{Z}$ is contained in $2\mathbb{Z}$. By Deligne's theorem,  the image must in fact be $2\mathbb{Z}$.

Further, torsion elements in $H^2(Sp(2g,2);\Z)$ all come from the abelianization of $Sp(2g,2)$.  It is proved in \cite{Igusa} that the commutator $[Sp(2g,2),Sp(2g,2)]$ coincides 
with the so-called Igusa subgroup $Sp(2g,4,8)$ of $Sp(2g,4)$ consisting of those symplectic matrices 
$\left(\begin{array}{cc}
A & B \\ 
C & D
\end{array}
\right)$ with the property that the diagonal entries of 
$A B^{\top}$ and  $C D^{\top}$  are multiples of $8$.
Since $Sp(2g,4,8)\supset Sp(2g,8)$ 
the pull-back of the universal central extension  on $Sp(2g,\mathbb{Z})$ to $Sp(2g,8)$ is genuinely divisible by $2$.

We are only left with the proof that the pullback of the class $c$ on $Sp(2g,4)$ is divisible by $2$. This is a consequence of Stein's stability theorem. Indeed, the divisibility by $2$ on $Sp(2g,8)$ implies that the mod $2$ reduction of the universal central extension $c$ is the pullback of the universal central extension of $Sp(2g,\Z/32\Z)$, so by stability  it is also the pullback of the universal central extension of $Sp(2g,\Z/4\Z)$, and we have a commutative diagram:

\begin{equation}
\begin{array}{clccccc}
1\to &\Z     & \to& \widetilde{Sp(2g,\Z)} & \to &  Sp(2g,\Z)& \to 1\\
     & \downarrow {\rm mod} \, 2 & & \downarrow F & & \downarrow  & \\
1\to &\Z/2\Z& \to &\widetilde{Sp(2g,\Z/4\Z)} &\to& Sp(2g,\Z/4\Z)       & \to 1.
\end{array}
\end{equation}
Let $H= \ker F$. Then $H$ is the preimage of $Sp(2g,4)\subset Sp(2g,\Z)$ and $H \cap \Z=2\Z$. 
By construction twice the class of the central extension 
\[ 1 \to \Z \to H \to Sp(2g,4)\to 1\]
is $c$ restricted to $Sp(2g,4)$.
\end{proof}

\subsection{Nonsplit central extensions}
In order to show that $H_2(Sp(2g,\Z/2^k\Z))=\Z/2\Z$, when $k\geq 2$ and $g\geq 4$ it is enough to 
show that $H_2(Sp(2g,\Z/m\Z))\neq 0$, for some value of $m$. 
This will follow by exhibiting a nonsplit central extension of $Sp(2g,\Z/m\Z)$. 
Let 
\[ 1\to \Z \to \widetilde{Sp(2g,\Z)} \to Sp(2g,\Z)\to 1\]
be the universal central extension of $Sp(2g,\Z)$ and let 
$c\in H^2(Sp(2g,\Z))=\Z$ be the generator. 
Let $Sp(2g,2)$ be the level 2 subgroup of $Sp(2g,\Z)$, namely the 
kernel of the mod $2$ reduction map $Sp(2g,\Z)\rightarrow Sp(2g,\Z/2\Z)$ and 
$\iota: Sp(2g,2) \to Sp(2g,\Z)$ the inclusion. 

 From  Lemma \ref{putmanlemma} there exists some $d\in H^2(Sp(2g,4))$
 such that $2d=\iota^*(c)$. 
 Let 
 \[ 1\to \Z \to \widetilde{G} \to Sp(2g,4)\to 1\]
be the central extension corresponding to the class $d$, so that we have a commutative diagram: 
\begin{equation}
\begin{array}{clccccc}
1\to &\Z& \to &\widetilde{G}&\to& Sp(2g,4)& \to 1\\
 & \downarrow \times 2 & & \downarrow & & \downarrow \iota & \\
1\to &\Z& \to &\widetilde{Sp(2g,\Z)} &\to& Sp(2g,\Z)& \to 1.
\end{array}
\end{equation}
The group $\widetilde{G}$ is thus a finite-index subgroup of $\widetilde{Sp(2g,\Z)}$. 
Let $\widetilde{K}\subset \widetilde{G}$ be a finite-index normal subgroup of 
$\widetilde{Sp(2g,\Z)}$ and $K$ be its image in $Sp(2g,4)$. 

Using the congruence subgroup property, we can find some $m$ such that 
$Sp(2g,m)\subset K$. 
Let $\widetilde{Sp(2g,m)}$ be the preimage of $Sp(2g,m)$ under the map 
$\widetilde{Sp(2g,\Z)} \to Sp(2g,\Z)$ and 
$\widetilde{H}=\widetilde{K}\cap \widetilde{Sp(2g,m)}$. 
Since it is the intersection of two finite-index normal subgroups, the group $\widetilde{H}$ 
is a finite-index normal subgroup of $\widetilde{Sp(2g,\Z)}$ that is contained in 
$\widetilde{K}$. 
Its intersection with the central $\Z$ in $\widetilde{Sp(2g,\Z)}$ is thus of the form 
$2n\Z$, for some $n\geq 1$. By Deligne's theorem we derive that $n=1$. 
We obtain the commutative diagram: 
\begin{equation}
\begin{array}{clccccc}
1\to &\Z& \to &\widetilde{H}&\to& Sp(2g,m)& \to 1\\
 & \downarrow \times 2 & & \downarrow & & \downarrow \iota & \\
1\to &\Z& \to &\widetilde{Sp(2g,\Z)} &\to& Sp(2g,\Z)& \to 1.
\end{array}
\end{equation}
We have $Sp(2g,\Z/m\Z)=Sp(2g,\Z)/Sp(2g,m)$. Define $\Gamma=\widetilde{Sp(2g,\Z)}/\tilde{H}$. 
We thus have a central extension
\[ 1\to \Z/2\Z \to \Gamma \to Sp(2g,\Z/m\Z)\to 1\]
Since $\widetilde{Sp(2g,\Z)}$ is the universal central extension of the perfect group 
$Sp(2g,\Z)$ the group $\widetilde{Sp(2g,\Z)}$  is perfect. This implies that 
$\Gamma$ is also perfect, and in particular it has no nontrivial homomorphism to $\Z/2$. 
We conclude that this central extension of $Sp(2g,\Z/m\Z)$ does not split, as desired. 

\begin{remark}
We can take for $\widetilde{K}=[\widetilde{Sp(2g,4)},\widetilde{Sp(2g,4)}]$, 
which is a finite index normal subgroup of $\widetilde{Sp(2g,\Z)}$. 
In this case $K$ is the Igusa  
subgroup $Sp(2g, 16, 32)$  consisting of those symplectic matrices 
$\left(\begin{array}{cc}
A & B \\ 
C & D
\end{array}
\right)$ from $Sp(2g,16)$ with the property that the diagonal entries of 
$A B^{\top}$ and  $C D^{\top}$  are multiples of $32$.
Therefore we can take $m=32$ in the argument above.   
\end{remark}

\begin{remark}
As a consequence Deligne's non-residual finiteness 
theorem is sharp. Putman in (\cite{Pu1}, Thm.F) has previously obtained the existence
of finite index subgroups of $\widetilde{Sp(2g,\Z)}$ which contain
$2\Z$ but not $\Z$. We make explicit his construction as the image of the center of the 
{universal central extension} $\widetilde{Sp(2g,\Z)}$ 
into the universal central extension of $Sp(2g,\Z/D\Z)$ is of order two, when $D\equiv 0 \; ({\rm mod} \; 4)$ and $g\geq 3$. 
\end{remark}

\subsection{Proof of Lemma \ref{surj}}\label{surjective}
Let $\mathbb K$ be a {\em number field}, ${\mathcal R}$ be the set of 
inequivalent valuations of $\mathbb K$ and 
$S\subset \mathcal R$ be a finite set of valuations of $\mathbb K$ 
including all the Archimedean (infinite) ones. 
Let 
\[ O(S)=\{x\in \mathbb K\colon\ v(x)\leq 1, \:\:  
{\rm for \:\: all } \: v\in \mathcal R\setminus S\}\]  
be the ring of $S$-integers in $\mathbb K$ and 
$\q\subset O(S)$ be a nonzero ideal. 
By $\mathbb K_v$ we denote the completion of $\mathbb K$ 
with respect to $v\in \mathcal R$. Following \cite{BMS},
a domain $\A$ which arises as $O(S)$ above will be called a 
{\em Dedekind domain of arithmetic type}. 

\vspace{0.2cm}
\noindent
Let $\A=O_S$ be a Dedekind domain of arithmetic type and 
$\q$ be an ideal of $\A$.  Denote by $Sp(2g,\A,\q)$ the kernel of the 
surjective homomorphism $p:Sp(2g,\A) \to Sp(2g, \A/\q)$. 
 The surjectivity is not a purely formal fact and  follows from 
the fact that in these cases the symplectic group coincides with 
the so-called "elementary symplectic group", and that it is trivial 
to  lift elementary generators of $Sp(2g, \A/\q)$ to $Sp(2g, \A)$, 
for a proof of this fact when $\A=\Z$ see (\cite{HO}, Thm. 9.2.5).

The goal of this section is to give a self-contained proof of the following result, which 
contains Lemma \ref{surj} as a particular case:

\begin{proposition}\label{lem H2epi}
Given an ideal $\q \in \A$, for any $g \geq 3$, the homomorphism  $p_*:H_2(Sp(2g,\A)) \to H_2(Sp(2g,\A/\q))$ is 
surjective. 
\end{proposition}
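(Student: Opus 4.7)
The plan is to apply the five-term exact sequence in low-degree homology attached to the Lyndon--Hochschild--Serre spectral sequence of the group extension
\[
1 \to Sp(2g,\A,\q) \to Sp(2g,\A) \to Sp(2g,\A/\q) \to 1.
\]
Setting $\Gamma=Sp(2g,\A)$, $N=Sp(2g,\A,\q)$ and $Q=Sp(2g,\A/\q)$, this sequence reads
\[
H_2(\Gamma) \xrightarrow{p_*} H_2(Q) \longrightarrow H_0(Q;H_1(N)) \longrightarrow H_1(\Gamma) \longrightarrow H_1(Q) \to 0.
\]
For $g\geq 3$ the group $\Gamma$ is perfect, by the same Matsumoto--Bass--Milnor--Serre type results invoked in the proof of Proposition \ref{sympl}, so $H_1(\Gamma)=0$. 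Exactness then converts the surjectivity of $p_*$ into the vanishing of the coinvariants $H_0(Q;H_1(N)) = N/[\Gamma,N]$, and the proposition reduces to establishing the identity
\[
Sp(2g,\A,\q) = [\,Sp(2g,\A),\; Sp(2g,\A,\q)\,].
\]

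To prove this, I would use that $[\Gamma,N]$ is automatically a normal subgroup of $\Gamma$, so it is enough to display each element of a set of normal generators of $N$ as a single commutator $[\gamma,n]$ with $\gamma\in\Gamma$ and $n\in N$. By Lemma 13.1 of \cite{BMS}, recalled in Section \ref{sec congsub}, such a set is provided by the matrices $U_{ij}(q)$, $L_{ij}(q)$, $U_{ii}(q)$ and $L_{ii}(q)$ with $q\in\q$. For the short-root (off-diagonal) generators a direct block-matrix computation yields the Chevalley-type identity
\[
U_{ik}(q) = [\,R_{ij}(1),\; U_{jk}(q)\,]
\]
whenever $i,j,k$ are pairwise distinct; the element $R_{ij}(1)$ lies in $\Gamma$, $U_{jk}(q)$ lies in $N$, and an entirely analogous identity handles the $L_{ik}(q)$. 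The use of an auxiliary third index $j\notin\{i,k\}$ is precisely where the hypothesis $g\geq 3$ enters.

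The main obstacle is the long-root (diagonal) case. The naive analogue $[R_{ij}(1),U_{ji}(q)] = U_{ii}(2q)$ only produces elements $U_{ii}(2q)$, which is inadequate once $2\in\q$. To handle this, I would supplement the previous formula with a mixed-root commutator such as $[U_{ij}(a),L_{ji}(q)]$, whose diagonal contribution yields $U_{ii}(q)$ modulo off-diagonal terms already placed in $[\Gamma,N]$ at the previous step. Alternatively, the structural description $E(2g,\A,\q)=Sp(2g,\A,\q|\q^2)$ furnished by Lemma \ref{thetasubgroup}, together with the reduction modulo $Sp(2g,\A,\q^2)$ carried out in Lemma \ref{quotlem}, allows one to trade long-root generators for short-root ones up to a correction in $Sp(2g,\A,\q^2)$, which is itself absorbed into $[\Gamma,N]$ by iterating the argument with $\q$ replaced by $\q^2$. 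Once every normal generator of $N$ has been shown to lie in $[\Gamma,N]$, the equality $N=[\Gamma,N]$ follows, and with it the surjectivity of $p_*$.
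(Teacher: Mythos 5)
Your overall strategy matches the paper's: the five-term exact sequence together with $H_1(Sp(2g,\A))=0$ reduces the proposition to showing $N:=Sp(2g,\A,\q)=[\Gamma,N]$ (equivalently, that $H_1(N)_{Q}=0$), and this is established by producing each normal generator of $N$ from Lemma~13.1 of \cite{BMS} as a commutator of a $\Gamma$-element with an $N$-element. Your treatment of the off-diagonal (short-root) generators via $U_{ik}(q)=[R_{ij}(1),U_{jk}(q)]$ is correct and is exactly where $g\geq 3$ enters.

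Where you go astray is the diagonal case. The difficulty you anticipate is illusory: instead of conjugating the off-diagonal element $U_{ij}(q)$ (which indeed yields $U_{ii}(2q)$), conjugate the \emph{diagonal} one $U_{jj}(q)$. A direct block computation gives
\[
R_{ij}(1)\,U_{jj}(q)\,R_{ij}(1)^{-1}=U_{ii}(q)\,U_{jj}(q)\,U_{ij}(q),
\qquad\text{hence}\qquad
[R_{ij}(1),U_{jj}(q)]=U_{ii}(q)\,U_{ij}(q),
\]
with no factor of $2$ anywhere. Since $U_{ij}(q)\in[\Gamma,N]$ by your short-root step, this yields $U_{ii}(q)\in[\Gamma,N]$ directly. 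This is precisely what the paper does in Lemma~\ref{annulH1}, where the relation $A_{ij}\cdot U_{jj}(q)=U_{ii}(q)U_{jj}(q)U_{ij}(q)^{-1}$ (with $A_{ij}$ playing the role of $R_{ij}(-1)$) collapses $u_{ii}(q)$ to $u_{ij}(q)$ in the coinvariants. The paper also uses the symplectic element $J$ to identify $u_{ij}(q)$ with $-l_{ij}(q)$, so only the $U$-type generators need independent treatment, and a further conjugation by $B_{ii}$ to dispose of the residual generators $n_{ii}(q)$ and $r_{ij}(q)$ that come from the genuine (not just normal) generating set of Proposition~\ref{prop gensp2gd}; you can ignore those since you work with normal generators.

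Your two proposed workarounds, as they stand, have genuine gaps. The mixed-root commutator $[U_{ij}(a),L_{ji}(q)]$ has top-left block congruent to $\mathbf 1_g$ only modulo $a\q$, so its top-right block is not $qe_{ii}$ and it does not produce $U_{ii}(q)$ modulo terms already handled; you would need to verify an actual identity, and the obvious candidates do not give what you want. The iteration through $\q^2$ is circular: proving $Sp(2g,\A,\q^2)\subset[\Gamma,N]$ by the same method again requires disposing of the diagonal generators for $\q^2$, and there is no base case and no limiting argument to close the descent $\q\supset\q^2\supset\q^4\supset\cdots$. Replacing both workarounds by the single identity above makes the proof complete and coincides with the paper's argument.
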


Normal generators of the group $Sp(2g,\A,\q)$ can be found in 
(\cite{BMS}, III.12), as follows.  
Fix a symplectic basis $\{a_i,b_i\}_{1 \leq i \leq g}$ and write 
the matrix by blocks according to the associated decomposition 
of $\A^{2g}$ into maximal isotropic subspaces.

\vspace{0.2cm}
\noindent
For each pair of distinct integers $i,j\in\{1,\dots,g\}$ denote by 
$e_{ij}\in \mathfrak M_{g}(\Z)$ the matrix whose only non-zero entry 
is a $1$ at the place $ij$. Set also $\mathbf 1_{k}$ for 
the $k$-by-$k$ identity matrix.  

\vspace{0.2cm}
\noindent
Then following (\cite{BMS}, Lemma 13.1) $Sp(2g,\A,\q)$ is the {\em normal subgroup} 
of $Sp(2g,\A)$ generated by the matrices: 
\begin{equation}
 U_{ij}(q) = \left( \begin{matrix}
                  \mathbf 1_{g} & qe_{ij}+qe_{ji} \\
                  0 & \mathbf 1_{g}
                 \end{matrix}
 \right), \quad U_{ii}(q)  = \left( \begin{matrix}
                  \mathbf 1_{g} & qe_{ii} \\
                  0 & \mathbf 1_{g}
                 \end{matrix}
 \right),
\end{equation} 
and
\begin{equation}
L_{ij}(q) = \left( \begin{matrix}
                 \mathbf 1_{g} & 0 \\
                  qe_{ij}+qe_{ji}  & \mathbf 1_{g}
                 \end{matrix}
 \right), \quad L_{ii}(q) 
= \left( \begin{matrix}
                  \mathbf 1_{g} & 0 \\
                  qe_{ii}  & \mathbf 1_{g}
                 \end{matrix}
 \right),
\end{equation}
where $q\in \q$. 

\vspace{0.2cm}
\noindent
Denote by $E(2g, \A,\q)$ the subgroup of $Sp(2g,\A,\q)$ generated by the 
matrices $U_{ij}(q)$ and $L_{ij}(q)$. 
\begin{lemma}\label{thetasubgroup}
The group $E(2g,\A,\q)$ is the subgroup $Sp(2g, \A, \q|\q^2)$ of $Sp(2g,\A)$  
of those symplectic matrices $\left( \begin{matrix}
                  A & B \\
                  C  & D
                 \end{matrix}
 \right)$ whose entries satisfy the conditions: 
\begin{equation}
B\equiv C\equiv 0\: ({\rm mod}\: \q), \: {\rm and} \:
A\equiv D\equiv \mathbf 1_{g}\: ({\rm mod}\: \q^2).
\end{equation}
\end{lemma}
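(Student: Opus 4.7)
The plan is to prove both inclusions of the claimed equality. The first one, $E(2g,\A,\q) \subseteq Sp(2g,\A,\q|\q^2)$, is the easy direction. I would first verify that $Sp(2g,\A,\q|\q^2)$ is in fact a subgroup: a direct block computation shows that if $M_1, M_2$ have diagonal blocks $\equiv \mathbf{1}_g \pmod{\q^2}$ and off-diagonal blocks in $\q$, then the $(1,1)$-entry of $M_1 M_2$ equals $A_1 A_2 + B_1 C_2 \equiv \mathbf{1}_g \pmod{\q^2}$ because $B_1 C_2 \in \q^2$, and similarly for the other blocks and for inverses (via the symplectic inverse formula $M^{-1} = \begin{pmatrix} D^t & -B^t \\ -C^t & A^t \end{pmatrix}$). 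Each generator $U_{ij}(q)$ and $L_{ij}(q)$ satisfies the defining conditions by inspection.

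For the reverse inclusion, I would perform a symplectic Gauss reduction on an arbitrary $M = \begin{pmatrix} A & B \\ C & D \end{pmatrix} \in Sp(2g,\A,\q|\q^2)$. Since $A \equiv \mathbf{1}_g \pmod{\q^2}$ is invertible, set $X = -A^{-1}B$. The symplectic relation $AB^t = BA^t$ forces $X$ to be symmetric, and its entries lie in $\q$ because $B \in \q$; hence $\begin{pmatrix} \mathbf{1}_g & X \\ 0 & \mathbf{1}_g \end{pmatrix}$ is a product of elementary generators $U_{ij}(q), U_{ii}(q) \in E(2g,\A,\q)$. Right multiplication by this matrix kills the upper-right block, producing $\begin{pmatrix} A & 0 \\ C & A^{-t} \end{pmatrix}$ (the $(2,2)$-block is forced by the symplectic condition). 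Similarly, $Y = A^t C$ is symmetric by $A^t C = C^t A$ with entries in $\q$, so right-multiplication by $\begin{pmatrix} \mathbf{1}_g & 0 \\ -Y & \mathbf{1}_g \end{pmatrix} \in E(2g,\A,\q)$ kills the lower-left block. What remains is a block-diagonal matrix $\begin{pmatrix} A & 0 \\ 0 & A^{-t} \end{pmatrix}$ with $A \equiv \mathbf{1}_g \pmod{\q^2}$.

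The main obstacle is handling this block-diagonal residue. For this I would examine the first-order linearization map
\[ \phi: Sp(2g,\A,\q) \to \mathrm{Sym}(g,\q/\q^2)^{\oplus 2} \]
sending $M-\mathbf{1}_{2g}$ to its two off-diagonal blocks modulo $\q^2$, whose kernel is precisely $Sp(2g,\A,\q^2)$. Both $E(2g,\A,\q)$ and $Sp(2g,\A,\q|\q^2)$ surject onto the target of $\phi$, so one is reduced to proving $Sp(2g,\A,\q^2) \subseteq E(2g,\A,\q)$. The key tool here is the commutator identity
\[ [U(X), L(Y)] = \begin{pmatrix} \mathbf{1}_g+XY+XYXY & -XYX \\ YXY & \mathbf{1}_g-YX \end{pmatrix}, \]
valid for symmetric $X, Y$ with entries in $\q$, which produces elements of $E(2g,\A,\q) \cap Sp(2g,\A,\q^2)$ with prescribed leading term $\mathbf{1}_g+XY$ on the diagonal. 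Iterating the Gauss reduction with these finer generators at successive stages of the $\q$-adic filtration eliminates the diagonal correction and completes the proof.
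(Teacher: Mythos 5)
Your proof matches the paper's up to the point where you reduce, via the symplectic Gauss elimination, to showing that the block-diagonal matrices $\left(\begin{smallmatrix} A & 0 \\ 0 & (A^{-1})^{\top}\end{smallmatrix}\right)$ with $A \equiv \mathbf{1}_g \pmod{\q^2}$ lie in $E(2g,\A,\q)$; that part and the subgroup verification agree with the paper. After that you diverge, and there are two problems.

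First, a minor but genuine error: the kernel of your linearization map $\phi$ is \emph{not} $Sp(2g,\A,\q^2)$. It consists of all matrices in $Sp(2g,\A,\q)$ with off-diagonal blocks in $\q^2$, but their diagonal blocks need only be $\equiv \mathbf{1}_g \pmod{\q}$; for instance $N_{ii}(q)$ with $q \in \q \setminus \q^2$ lies in $\ker\phi$ but not in $Sp(2g,\A,\q^2)$. The reduction you want is in any case already furnished directly by the Gauss step, so the $\phi$-detour is both incorrect and unnecessary.

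Second, and more seriously, the final paragraph is a gap. The commutator identity $[U(X),L(Y)]$ is correct, and with the specific choice $X = q_1(e_{ij}+e_{ji})$, $Y = q_2 e_{jj}$ it produces (after one further $U$) exactly the block-diagonal matrix with $A = \mathbf{1}_g + q_1q_2 e_{ij}$ — so the mechanism is sound for a single elementary matrix. But ``iterating the Gauss reduction at successive stages of the $\q$-adic filtration'' does not terminate: a general $A \equiv \mathbf{1}_g \pmod{\q^2}$ has entries of unbounded $\q$-adic valuation, the group $Sp(2g,\A)$ is not $\q$-adically complete, and an infinite product of corrections does not produce an element of $E(2g,\A,\q)$. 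The paper sidesteps this by first reducing to the case where $A$ is a single elementary matrix (which implicitly uses that the principal congruence subgroup of $GL(g,\A)$ of level $\q^2$ is generated by elementary matrices, a standard fact for Dedekind domains of arithmetic type going back to Bass--Milnor--Serre, whose Lemma 13.1 is being followed closely here), and then exhibits an explicit four-term factorization into generators of $E(2g,\A,\q)$. To repair your argument you should make this reduction to elementary matrices explicit; once $A$ is elementary your commutator computation finishes the proof in one step with no iteration needed.
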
 
\begin{proof}
We follow closely the proof of Lemma 13.1 from \cite{BMS}. 
One verifies easily that $Sp(2g,\A, \q|\q^2)$ is indeed a subgroup 
of $Sp(2g,\A)$. It then suffices to show that all elements of the form 
$\left( \begin{matrix}
                  A & 0 \\
                  0  & (A^{-1})^{\top}
                 \end{matrix}
 \right)$, with $A\in GL(g,\A)$ satisfying 
$A\equiv \mathbf 1_{g} \: ({\rm mod } \: \q^2)$  
belong to $E(2g,\A,\q)$. Here the notation $A^{\top}$ stands 
for the transpose of the matrix $A$. 
Next, it suffices to verify this claim when 
$A$ is an elementary matrix, and hence when $A$ is in 
$GL(2,\A)$ and $g=2$. Taking therefore $A=\left( \begin{matrix}
                  1 & 0 \\
                  q_1q_2  & 1
                 \end{matrix}
 \right)$, where $q_1,q_2\in\q$, we can write: 
\begin{equation}
\left( \begin{matrix}
                  A & 0 \\
                  0  & (A^{-1})^{\top}
                 \end{matrix}
 \right)= 
\left( \begin{matrix}
                  1_{2} & \sigma A^{\top} \\
                  0  & 1_{2}
                 \end{matrix}
 \right)\left( \begin{matrix}
                  1_{2} & 0 \\
                  -\tau  & 1_{2}
                 \end{matrix}
 \right)\left( \begin{matrix}
                  1_{2} & \sigma \\
                  0  & 1_{2}
                 \end{matrix}
 \right)\left( \begin{matrix}
                  1_{2} & 0 \\
                  \tau  & 1_{2}
                 \end{matrix}
 \right),
\end{equation} 
where $\sigma=\left( \begin{matrix}
                 0 & q_1 \\
                 q_1  & 0
                 \end{matrix}
 \right)$ and $\tau=\left( \begin{matrix}
                 q_2 & 0 \\
                 0  & 0
                 \end{matrix}
 \right)$. 
\end{proof}

\vspace{0.2cm}
\noindent
Define now 
\begin{equation}
R_{ij}(q) 
= \left( \begin{matrix}
                  \mathbf 1_{g}+ qe_{ij} & 0 \\
                  0  & \mathbf 1_{g}-q e_{ji}
                 \end{matrix}
 \right), \: {\rm for }\:  i\neq j, 
\end{equation} 
and 
\begin{equation} 
N_{ii}(q)=  
\left( \begin{matrix}
                  \mathbf 1_{g}+ (q+q^2+q^3)e_{ii} & -q^2e_{ii} \\
                  q^2e_{ii}  & \mathbf 1_{g}-q e_{ii}
                 \end{matrix}
 \right).
\end{equation}

\begin{proof}[Proof of proposition \ref{lem H2epi}]
If $M$ is endowed with a structure of $G$-module we denote by $M_G$ the quotient module
 of co-invariants, namely the quotient of $M$ by the submodule  
generated by $\langle g\cdot x-x, g\in G, x\in M\rangle$. 

\vspace{0.2cm}\noindent 
It is known (see \cite{Brown}, VII.6.4) that an exact sequence of groups 
\[ 1\to K\to G \to Q\to 1\] 
induces the following 5-term exact sequence in homology 
(with coefficients in an arbitrary $G$-module $M$): 
\begin{equation}
H_2(G;M)\to H_2(Q;M_H)\to H_1(K)_{Q}\to H_1(G,M)\to H_1(Q,M_H) \to 0.
\end{equation}

\vspace{0.2cm}\noindent 
From the $5$-term exact sequence associated to the short exact sequence: 
\[1\to  Sp(2g,\A,\q)\to Sp(2g,\A)\to Sp(2g,\A/\q)\to 1\]
we deduce that the surjectivity of $p_\ast$ is equivalent to  Lemma~\ref{annulH1} hereafter.

\begin{lemma}\label{annulH1}
For any ideal $\q$, if $g\geq 3$ we have:  
\begin{equation}
H_1(Sp(2g,\A,\q))_{Sp(2g,\A/\q)} = 0.
\end{equation}
\end{lemma}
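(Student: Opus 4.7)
The plan is to verify the vanishing directly on the explicit normal generators of $Sp(2g,\A,\q)$ produced in Proposition~\ref{prop gensp2gd}. Recall that, for a normal subgroup $K \triangleleft G$ with quotient $Q = G/K$, the coinvariants $H_1(K)_Q$ are obtained from the abelianization $K^{\mathrm{ab}}$ by killing the classes $[sTs^{-1}] - [T]$ with $s \in G$ and $T \in K$. Hence it suffices to show that each of the generators
\begin{equation}
U_{ij}(q),\; U_{ii}(q),\; L_{ij}(q),\; L_{ii}(q),\; R_{ij}(q),\; N_{ii}(q), \quad q \in B(\q),\; i \neq j,
\end{equation}
becomes trivial in $H_1(Sp(2g,\A,\q))_{Sp(2g,\A/\q)}$.

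The core technical tool is a Steinberg-style commutator calculation. Picking three pairwise distinct indices $i,j,k$ (possible precisely because $g \geq 3$), a direct block matrix computation yields the identities
\begin{equation}
R_{ij}(1)\, U_{jk}(q)\, R_{ij}(1)^{-1} = U_{jk}(q)\, U_{ik}(q), \qquad R_{ij}(1)\, U_{jj}(q)\, R_{ij}(1)^{-1} = U_{jj}(q)\, U_{ij}(q)\, U_{ii}(q),
\end{equation}
holding inside $Sp(2g,\A,\q)$. Since $R_{ij}(1) \in Sp(2g,\A)$, both give relations in the coinvariants; the first forces $[U_{ik}(q)] = 0$ for every distinct pair of indices and every $q \in B(\q)$, and then the second forces $[U_{ii}(q)] = 0$ as well. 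Analogous block computations on the lower-triangular block dispose of the $L_{ij}(q)$ and the $L_{ii}(q)$. The generators $R_{ij}(q)$ with $i \neq j$ arise as Chevalley-type commutators $[s,T]$ in $Sp(2g,\A)$ with $T \in Sp(2g,\A,\q)$, by the classical $SL$-identity $[\mathbf{1}+e_{ik},\,\mathbf{1}+q e_{kj}] = \mathbf{1}+q e_{ij}$ extended block-diagonally to the symplectic setting, so their classes already vanish in $H_1(Sp(2g,\A,\q))$. Finally, $N_{ii}(q)$ is congruent modulo $Sp(2g,\A,\q^2)$ to a product of $U_{ii}$- and $L_{ii}$-type matrices by the abelian description of $Sp(2g,\A,\q)/Sp(2g,\A,\q^2)$ used in Lemma~\ref{quotlem}; iterating the argument on the deeper congruence subgroups, whose normal generators obey the same pattern, reduces its class to combinations already known to vanish.

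The main obstacle is the bookkeeping: each block matrix conjugation must be shown to close into a clean product of previously handled generators, with no stray terms lying outside $[Sp(2g,\A,\q),Sp(2g,\A,\q)]$. The hypothesis $g \geq 3$ enters precisely to supply the required third index in the Steinberg commutator trick, and this is the only genuine role of the numerical assumption. The calculations are routine but delicate, entirely analogous to those of Bass--Milnor--Serre~\cite{BMS} for $SL_n$.
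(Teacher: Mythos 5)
Your overall strategy coincides with the paper's: work with the explicit generators from Proposition~\ref{prop gensp2gd}, use conjugation by hyperbolic elements $R_{ij}(\pm 1)$ of $Sp(2g,\A)$ to kill the $U$, $L$ and $R$ classes in the coinvariants, with $g\geq 3$ supplying the third index for the Steinberg trick. (The paper writes these conjugators as $A_{ij}=R_{ij}(-1)$ and takes a shortcut for the $L$'s via the involution $J$, but your computations are just the sign-flipped versions and are correct.) One small imprecision: commutators $[s,T]$ with $s\in Sp(2g,\A)\setminus Sp(2g,\A,\q)$ and $T\in Sp(2g,\A,\q)$ vanish in the coinvariants $H_1(Sp(2g,\A,\q))_{Sp(2g,\A/\q)}$, not in $H_1(Sp(2g,\A,\q))$ itself; the conclusion you draw is still right.

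The serious gap is in the final step for $N_{ii}(q)$. Your claim that $N_{ii}(q)$ is congruent modulo $Sp(2g,\A,\q^2)$ to a product of $U_{ii}$- and $L_{ii}$-type matrices is false. In the abelian quotient $Sp(2g,\A,\q)/Sp(2g,\A,\q^2)\cong\mathfrak{sp}_{2g}(\q/\q^2)$, the class of $N_{ii}(q)$ is $q(e_{ii}-e_{g+i,g+i})$, a nonzero element of the \emph{diagonal} ($\mathfrak{gl}_g$) block, whereas $U$ and $L$ generators project into the two \emph{off-diagonal} symmetric blocks, and the $R_{ij}$'s with $i\neq j$ project into the off-diagonal part of the $\mathfrak{gl}_g$ block. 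No combination of $U$, $L$, $R$ classes produces the diagonal of $\mathfrak{gl}_g$, so $N_{ii}(q)$ cannot be reduced to them this way, and consequently the ``iteration on deeper congruence subgroups'' never gets started. The extra idea you need is to conjugate not by a block-diagonal (hyperbolic) element but by the lower-triangular symplectic matrix
$B_{ii}=\left(\begin{smallmatrix}\mathbf 1_g&0\\ e_{ii}&\mathbf 1_g\end{smallmatrix}\right)$.
One computes $B_{ii}U_{ii}(q)B_{ii}^{-1}=\left(\begin{smallmatrix}\mathbf 1_g-qe_{ii}&qe_{ii}\\ -qe_{ii}&\mathbf 1_g+qe_{ii}\end{smallmatrix}\right)$, which is congruent modulo $\q^2$ to $U_{ii}(q)L_{ii}(q)^{-1}N_{ii}(q)^{-1}$; the error term lies in $Sp(2g,\A,\q^2)\subset E(2g,\A,\q)$ and hence, by Lemma~\ref{thetasubgroup}, is a product of $U$- and $L$-type generators whose coinvariant classes you have already killed. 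Passing to the coinvariants, where $B_{ii}$ acts trivially, this relation forces $n_{ii}(q)=0$. Without this or an equivalent substitute, your proof does not close.
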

\begin{proof}[Proof of Lemma \ref{annulH1}]
For each pair of distinct integers 
$i,j\in\{1,\dots,g\}$ denote by $A_{ij}$ 
the symplectic matrix:

\begin{equation}
A_{ij}=\left( \begin{matrix}
                  \mathbf 1_{g}-e_{ij} & 0 \\
                   0   & \mathbf 1_g+e_{ji}
                 \end{matrix}
\right).
\end{equation}

\vspace{0.2cm}\noindent 
Then for each triple of distinct integers $(i,j,k)$ and $q\in \q$ the action 
of $A_{ij}$  by conjugacy on the generating matrices of 
$Sp(2g,\A,\q)$ is given by:  

\[
A_{ij} \cdot U_{jj}(q) = U_{ii}(q) U_{jj}(q) U_{ij}(q)^{-1}, 
\]
\[
A_{ki}\cdot U_{ij}(q)= U_{ij}(q) U_{jk}(q)^{-1}, \quad 
\]
\[
A_{ji}\cdot U_{ij}(q)= U_{ij}(q) U_{jj}(q)^{-2},
\]
\[
A_{ij}\cdot L_{jj}(q) = L_{ii}(q)L_{jj}(q)L_{ij}(q),
\]
\[  
A_{ik} \cdot L_{ij}(q) = L_{ij}(q), 
\]
\[A_{ij}\cdot  L_{ij}(q) =L_{ij}(q)L_{ii}(q)^2,
\]
where we used the notation $A\cdot U= AUA^{-1}$.  We also have: 
\[
A_{ij}\cdot R_{jk}(q) = R_{jk}(q)R_{ik}(q).
\]
Further the symplectic map $J=\left( \begin{matrix}
                  0 & -{\mathbf 1}_g \\
                  \mathbf 1_g  & 0
                 \end{matrix}
 \right)$ acts as follows: 

\[
 J \cdot U_{ij}(q) =-L_{ij}(q), \quad J \cdot U_{ii}(q) = -L_{ii}(q) .
\]

Denote by lower cases the classes of the maps 
$U_{ij}(q)$, $U_{ii}(q)$, $L_{ij}(q)$,  $L_{ii}(q)$, $R_{ij}(q)$ 
and $N_{ii}(q)$ in the  quotient 
$H_1(Sp(2g,\A,\q))_{Sp(2g,\A/\q)}$ of the abelianization 
$H_1(Sp(2g,\A,\q))$. By definition, the action of 
$Sp(2g,\A/\q)$ on $H_1(Sp(2g,\A,\q))_{Sp(2g,\A/\q)}$ is trivial.

Using the action of $J$ we obtain that  
$u_{ij}(q)+l_{ij}(q)=0$ in $H_1(Sp(2g,\A,\q))_{Sp(2g,\A/\q)}$, 
and hence  we can discard the generators $l_{ij}(q)$. 
As $g \geq 3$, from  the action of $A_{ki}$ on $u_{ij}(q)$ 
we derive that $u_{jk}(q)=0$, for every $j\neq k$. 
Using the action of $A_{ij}$ on $u_{jj}(q)$ we obtain that 
$u_{jj}(q)=0$ for every $j$. Further the action of $A_{ij}$ on $r_{jk}(q)$ 
yields $r_{ik}(q)=0$, for all $i\neq k$.

Consider now the symplectic matrix 
$B_{ii}=\left( \begin{matrix}
                   {\mathbf 1}_g & 0 \\
                    e_{ii}  & \mathbf 1_g  
                 \end{matrix}
 \right)$. Then 

\begin{equation}
B_{ii}\cdot U_{ii}(q)=\left( \begin{matrix}
                   {\mathbf 1}_g -q e_{ii} & q e_{ii} \\
                   -q e_{ii}  & \mathbf 1_g + q e_{ii}  
                 \end{matrix}
 \right)
\end{equation}
and hence 
\[
B_{ii}\cdot U_{ii}(q)
\equiv U_{ii}(q) L_{ii}(q)^{-1} N_{ii}(q)^{-1} \: ({\rm mod }\: \q^2).
\]

Recall that the elements of $Sp(2g,\A,\q^2)\subset Sp(2g,\A,\q|\q^2)$ 
can be written as products of the generators $U_{ij}(q)$ and $L_{ij}(q)$  
according to Lemma \ref{thetasubgroup}, 
whose images in the quotient $H_1(Sp(2g,\A,\q))_{Sp(2g,\A/\q)}$  vanish. 
Therefore $B_{ii}\cdot u_{ii}(q)= u_{ii}(q)+l_{ii}(q)-n_{ii}(q)$. 
This proves that $n_{ii}(q)=0$ in $H_1(Sp(2g,\A,\q))_{Sp(2g,\A/\q)}$. 
Consequently $H_1(Sp(2g,\A,\q))_{Sp(2g,\A/\q)}=0$, as claimed. 
\end{proof}

\end{proof}

\subsection{Nondivisibility of the universal symplectic central extension when restricted to level
subgroups} We will show that lemma \ref{putmanlemma} is sharp: 

\begin{lemma}\label{notputmanlemma}
Let $L\geq 2$ be an even number. The pull-back of the class of the universal central extension $c \in H^2(Sp(2g,\Z);\Z)$ to $Sp(2g,L)$  is not divisible by 2 if $4 \nmid L$.  
\end{lemma}
The proof is the result of discussions we had with D.~Benson about Lemma \ref{putmanlemma}.
\begin{proof}
Let $\iota:Sp(2g,2)\to Sp(2g,\Z)$ be the inclusion.  
We will prove  by contradiction that $\iota^*(c)\in H^2(Sp(2g,2);\Z)$ is not divisible by $2$. 
Suppose that there exists an extension $\widetilde{G}$ of $Sp(2g,2)$ 
whose class $d$ in $H^2(Sp(2g,2);\Z)$ satisfies $\iota^*(c)=2d$. 
Then we have a commutative diagram: 

\[
\begin{array}{clccccc}
1\to &\Z      & \to& \widetilde{G} & \to &  Sp(2g,2)& \to 1\\
     & \downarrow \times 2  & & \downarrow & & \downarrow \iota & \\
1\to &\Z & \to &\widetilde{Sp(2g,\Z)} &\to& Sp(2g,\Z)       & \to 1.
\end{array}
\]

Denote by $\widehat{Sp(2g,\Z)}$ the quotient of the universal central extension 
$\widetilde{Sp(2g,\Z)}$ by the square of the 
generator of the central $\Z$.

The composition of  group homomorphisms 
$\widetilde{G}\to \widetilde{Sp(2g,\Z)} \to \widehat{Sp(2g,\Z)}$ lifts the 
inclusion $\iota$ and factors through $\widetilde{G}/\Z=Sp(2g,2)$. Therefore 
the restriction of the extension $\widehat{Sp(2g,\Z)}$ to $Sp(2g,2)$ is split.

The image $H$ of $Sp(2g,2)$ within $\widehat{Sp(2g,\mathbb{Z})}$ by this section 
might not be a normal subgroup. However the group generated by squares of elements in  $H$
is a normal subgroup $K\lhd \widehat{Sp(2g,\mathbb{Z})}$. Moreover, $K$ is 
isomorphic to the group $Sp(2g,4)$, which is the group generated by the squares in $Sp(2g,2)$. 

By taking the quotient by the normal subgroup $K$ above we obtain a 
central extension: 
\[ 1\to \Z/2\Z \to \widehat{Sp(2g,\Z/4\Z)} \to Sp(2g,\Z/4\Z)  \to 1.\]
By the proof of Theorem \ref{tors-sympl0} this  exact sequence is nonsplit and by construction it splits when restricted to the subgroup $Sp(2g,2)/Sp(2g,4) =\mathfrak{sp}_{2g}(\mathbb{Z}/2\mathbb{Z})$, 
the symplectic Lie algebra over the field of two elements.

We now compute $H^2(Sp(2g,\Z/4\Z);\Z/2\Z)$ using the Leray-Serre spectral sequence of the extension:
\[
1 \to  \mathfrak{sp}_{2g}(\mathbb{Z}/2\mathbb{Z}) \to  Sp(2g,\Z/4\Z) \to  Sp(2g,\Z/2\Z) \to 1.
\]

By Steinberg's computation~\cite{Steinberg} $H_2(Sp(2g,\Z/2\Z),\Z/2\Z) =0$ for $g \geq 4$. Since symplectic groups are perfect we derive $H^2(Sp(2g,\Z/2\Z),\Z/2\Z)=0$.  
By Putman~\cite[Proof of Thm.G]{Pu1}, $H^1(Sp(2g,\Z/2\Z),\mathfrak{sp}_{2g}(\mathbb{Z}/2\mathbb{Z}))= 0$, 
so the only possibility is that $H^2(Sp(2g,\Z/4\Z);\Z/2\Z)) = H^0(Sp(2g,\Z/2\Z); H^2 (\mathfrak{sp}_{2g}(\mathbb{Z}/2\mathbb{Z});\mathbb{Z}/2\mathbb{Z} )) \neq 0$. But this means  that the above nonsplit extension of $Sp(2g,\Z/4\Z)$  can not split when restricted to $\mathfrak{sp}_{2g}(\mathbb{Z}/2\mathbb{Z})$, a contradiction.

The proof for even $L$ with $4 \nmid L$ is similar and we skip the details.

\end{proof}  

\begin{remark}
D.~Benson informed us that the largest subgroup of $\mathfrak{sp}_{2g}(\mathbb{Z}/2\mathbb{Z})$
on which the above extension splits has index $2^{g+1}$. 
\end{remark}

{
\small

\bibliographystyle{plain}

}

\end{document}